\newcommand{\bw}{{\bf w}}
\newcommand{\longversion}[1]{#1}
\newcommand{\shortversion}[1]{}
\newcommand{\myqed}{}
\title{Projective Splitting with Forward Steps only Requires Continuity}
\author{Patrick R. Johnstone\thanks{Department of Management Science 
                                    and Information Systems, Rutgers Business
                                    School Newark and New Brunswick, Rutgers University}
        \and 
        Jonathan Eckstein$^*$}
\begin{document}
	\allowdisplaybreaks
\maketitle
\begin{abstract}

A recent innovation in projective splitting algorithms for monotone operator inclusions
has been the development of a procedure using two forward steps instead of the customary
proximal steps for operators that are Lipschitz continuous.
This paper shows that the Lipschitz assumption is 
unnecessary when the forward steps are performed in finite-dimensional spaces: a 
backtracking linesearch yields a convergent algorithm for operators
that are merely continuous with full domain.
\end{abstract}
\section{Introduction}
For a collection of real Hilbert spaces $\{\calH_i\}_{i=0}^{n}$, consider the problem of finding $z\in\calH_{0}$
such that 
\begin{eqnarray}\label{ProbMono}
\longversion{0\in \sum_{i=1}^{n} G_i^* T_i(G_i z),}
\shortversion{\textstyle{0\in \sum_{i=1}^{n} G_i^* T_i(G_i z),}}
\end{eqnarray}
where $G_i:\calH_{0}\to\calH_i$ are linear and bounded operators, $T_i:\calH_i
\to 2^{\calH_i}$ are maximal monotone operators.  We suppose that 
$T_i$ is continuous with $\dom(T_i)=\calH_i$ for each $i$ in some subset
$\Iforw\subseteq\{1,\ldots,n\}$. A key special case of~\eqref{ProbMono} is
\begin{eqnarray}\label{ProbOpt}
\longversion{\min_{x\in\mathcal{H}_{0}} \sum_{i=1}^{n}f_i(G_i x),}
\shortversion{\textstyle{\min_{x\in\mathcal{H}_{0}} \sum_{i=1}^{n}f_i(G_i x),}}
\end{eqnarray}
where every $f_i:\calH_i\to\mathbb{R}$ is closed, proper and convex, with some
subset of the functions also being Fr\'echet differentiable everywhere. Under
appropriate constraint qualifications, \eqref{ProbMono} and
\eqref{ProbOpt} are equivalent.  
\longversion{Problem~\eqref{ProbOpt} arises in a host of
applications such as machine learning, signal and image processing, inverse
problems, and computer vision; 
see~\cite{boyd2011distributed,combettes2011proximal,combettes2005signal}
for some examples.}

A relatively recently proposed class of operator splitting algorithms which
can solve \eqref{ProbMono}  \longversion{is \emph{projective splitting}.  
It originated
with~\cite{eckstein2008family} and was then generalized to more than two
operators in~\cite{eckstein2009general}.  The related algorithm
in~\cite{alotaibi2014solving} introduced a technique for handling compositions
of linear and monotone operators, and~\cite{combettes2016async} proposed an
extension to ``block-iterative'' and asynchronous operation ---
block-iterative operation meaning that only a subset of the operators making
up the problem need to be considered at each iteration (this approach may be
called ``incremental" in the optimization literature).  A restricted and
simplified version of this framework 
appears in~\cite{eckstein2017simplified}.} \shortversion{is
\emph{projective splitting}~\cite{eckstein2008family,eckstein2009general,alotaibi2014solving,combettes2016async}.} \longversion{Our recent work 
in~\cite{johnstone2018projective} incorporated forward steps
into the projective splitting framework, in place of the customary proximal
(backward) steps, for any Lipschitz continuous operators, and introduced
backtracking and adaptive stepsize rules; see also~\cite{tran2015new}.
The even more recent work
\cite{johnstone2018convergence} derived convergence rates for the method under
various conditions. 

In general, projective splitting offers unprecedented levels of flexibility
compared with previous operator splitting algorithms such
as~\cite{mercier1979lectures,lions1979splitting,davis2015three,tseng2000modified}.
The framework can be applied to arbitary sums of maximal monotone operators,
the stepsizes can vary by operator and by iteration, compositions with linear
operators can be handled, and block-iterative asynchronous implementations
have been demonstrated. Furthermore the number of times each operator is
processed does not need to be equal (either exactly or approximately).

} In~\cite{johnstone2018projective}, we showed that it is possible for
projective splitting to process Lipschitz-continuous operators using a pair of
forward steps rather than the customary proximal step.  In general, the
stepsize must be bounded by the inverse of the Lipschitz constant, but a
backtracking linesearch procedure is available when this constant is unknown.
See also~\cite{tran2015new} for a similar approach to using forward steps in a
more restrictive projective splitting context, without backtracking.


The purpose of this work is to show that this Lipschitz assumption is
unnecessary. It demonstrates that, when the Hilbert spaces $\mathcal{H}_i$ in
\eqref{ProbMono} are finite dimensional for $i\in\Iforw$, the two-forward-step
procedure with backtracking linesearch yields weak convergence to a solution
assuming only simple continuity and full domain of the operators
$T_i$.\footnote{We still speak of weak convergence because the spaces $\mathcal{H}_i$ may be infinite dimensional for $i \not\in \Iforw$. 
If $\mathcal{H}_i$ is infinite dimensional for $i\in\Iforw$,
we can instead require $T_i$ to be Cauchy continuous for all bounded
sequences.} A new argument is required beyond those in
\cite{johnstone2018projective} since the stepsizes resulting from the
backtracking linesearch are no longer guaranteed to be bounded away from $0$.

Theoretically, this result aligns projective splitting with two related
monotone operator splitting methods which utilize two forward steps per iteration and
only require continuity in finite dimension. These are Tseng's forward-backward-forward
method~\cite{tseng2000modified} and the extragradient
method~\cite{korpelevich1977extragradient,iusem1997variant,bello2015variant}.
These methods apply to special cases of Problem \eqref{ProbMono} with $n=2$,
$\Iforw=\{1\}$, and $G_1=G_2=I$; the extragradient method also restricts $T_2$ to
be the normal cone map of a closed convex set.  While the original
extragradient method~\cite{korpelevich1977extragradient} was applied to
variational inequalities under Lipschitz continuity, it was extended in
\cite{iusem1997variant} to include a backtracking linesearch that works under
continuity alone and in \cite{bello2015variant} to solve monotone
inclusions. In fact, the algorithm of
\cite{iusem1997variant,bello2015variant} for just one operator is
almost the special case of projective splitting applied to Problem
\eqref{ProbMono} with one operator and $\Iforw=\{1\}$, the only difference being
a stricter criterion to terminate the backtracking linesearch. Tseng's method
can also be connected to projective splitting with one operator and $\Iback = \{1\}$ following the arguments of~\cite[Section 5.1]{solodov1999hybrid}.

All of these methods can be viewed in contrast with the classical
forward-backward splitting algorithm~\cite{mercier1979lectures}. This method
utlizes a single forward step at each iteration but requires a cocoercivity
assumption which is in general stricter than Lipschitz continuity. Also
disadvantageous is that the choice of stepsize depends on knowledge of the
cocoercivity constant and no backtracking linesearch is known to be available.
Progress was made in a very recent paper \cite{tam2018forward} which modified
the forward-backward method so that it can be applied to (locally) Lipschitz
continuous operators with backtracking for unknown Lipschitz constant. The
locally Lipschitz continuous assumption is stronger than the mere continuity
assumption considered here and in
\cite{tseng2000modified,iusem1997variant,bello2015variant}, and for known
Lipschitz constant the stepsize constraint is more restrictive. 

\longversion{The rest of the paper is organized as follows:
In Section \ref{secNot}, we present notation and some basic background
results. In Section \ref{secAlgOverview}, we precisely state the projective
splitting algorithm and our assumptions, and
collect some necessary results from \cite{johnstone2018projective}. Finally section
\ref{secMain} proves the main result. 
}

\longversion{
\section{Preliminaries and Notation}
\label{secMath}
}
As in~\cite{johnstone2018projective}, will work with a slight restriction of
problem~\eqref{ProbMono}, namely
\begin{eqnarray}\label{probCompMonoMany}
\longversion{0\in \sum_{i=1}^{n-1} G_i^* T_i(G_i z) +T_n(z).}
\shortversion{\textstyle{0\in \sum_{i=1}^{n-1} G_i^* T_i(G_i z) +T_n(z).}}
\end{eqnarray}
In terms of problem~\eqref{ProbMono}, we are simply requiring that $G_n$ be
the identity operator and thus that $\calH_n = \calH_0$. This is not much of a
restriction in practice, since one could redefine the last operator as 
$T_{n}\leftarrow G^*_{n} \circ T_{n} \circ G_{n}$, or one could
simply append a new operator $T_n$ with $T_n(z) = \{0\}$ everywhere. 

\label{secNot}
We will use a boldface $\bw = (w_1,\ldots,w_{n-1})$ for elements 
of $\calH_1\times\ldots\times\calH_{n-1}$.
To ease the presentation, we use the following notation
throughout, where $I$ denotes the identity operator:
\begin{align} \label{defwnpm}
G_{n}:\mathcal{H}_{n}\to\mathcal{H}_{n} &\triangleq I
&
(\forall\,k\in\mathbb{N}) \quad w_{n}^k &\triangleq -\textstyle{\sum_{i=1}^{n-1} G_i^*w_i^k}.
\end{align}
%
%
%
%
%
\longversion{
For any maximal monotone operator $A$ we will use the notation
\begin{eqnarray*}
	\prox_{\rho A} = (I+\rho A)^{-1}
\end{eqnarray*}
for any scalar $\rho > 0$ to denote the \emph{proximal operator}, 
also known as the backward or implicit step with respect to $A$.  This means that
\begin{eqnarray}\nonumber\label{defprox2}
x = \prox_{\rho A}(a) \quad\implies\quad \exists y\in Ax:x+\rho y = a.
\end{eqnarray}
The $x$ and $y$ satisfying this relation are unique. 
Furthermore, $\prox_{\rho A}$ is defined everywhere and
$\text{range}(\prox_A) = \text{dom}(A)$ \cite[Prop. 23.2]{bauschke2011convex}. 

By continuity, we mean in the strong topology defined in terms of the norm of
the given Hilbert space. That is, for all $g_0\in\calH_i$ and $\epsilon>0$,
there exists $\delta(g_0,\epsilon)$ s.t. whenever
$\|g_0-g\|\leq\delta(g_0,\epsilon)$, $\|T_i(g_0)-T_i(g)\|\leq\epsilon$.
Uniform continuity means that the constant is independent of $g_0$, i.e. the
above statement holds with $\delta(g_0,\epsilon)=\delta(\epsilon)$.

We use the standard ``$\rightharpoonup$" notation to denote weak convergence,
which is of course equivalent to ordinary convergence in finite dimensional
settings. 
}


\section{Algorithm, Principal Assumptions, and Preliminary Analysis}
\label{secAlgOverview}
Algorithm~\ref{AlgfullyAsync} presents the algorithm analyzed in this paper. It is essentially
the block-iterative and potentially asynchronous projective splitting
algorithm as in~\cite{johnstone2018projective}, but directly incorporating a
backtracking linesearch procedure.

Let $\boldsymbol{\mathcal{H}} = \mathcal{H}_0\times
\mathcal{H}_1\times\cdots\times\mathcal{H}_{n-1}$ and $\calH_n = \calH_0$. The
algorithm produces a sequence of iterates denoted by $p^k =
(z^k,w_1^k,\ldots,w_{n-1}^k)\in\boldsymbol{\calH}$. Define the \emph{extended
solution set} or \emph{Kuhn-Tucker set} of~\eqref{probCompMonoMany} to be
\begin{eqnarray} \label{defCompExtSol}
\calS
	=  \left\{ (z,\bw) \in \boldsymbol{\mathcal{H}} \,\Big|\, 
	w_i\in T_i(G_i z),\, i=1,\ldots,n-1, 
	-\sum_{i=1}^{n-1} G_i^* w_i  \in T_n(z) \right\}.
\end{eqnarray}
Clearly $z\in\mathcal{H}_{0}$ solves~\eqref{probCompMonoMany} if and only if
there exists $\bw\in\calH_1 \times \cdots \times \calH_{n-1}$ such that
$(z,\bw)\in\calS$. 

Algorithm \ref{AlgfullyAsync} is a special case of a general seperator-projector method
for finding a point in a closed and convex set. At each iteration the method
constructs an affine function $\varphi_k:\calH^n\to \mathbb{R}$ which
separates the current point from the target set $\calS$ defined in
\eqref{defCompExtSol}. In other words, if $p^k$ is the current point in
$\boldsymbol{\calH}$ generated by the algorithm, $\varphi_k(p^k)>0$, and
$\varphi_k(p)\leq 0$ for all $p\in\calS$. The next point is then the
projection of $p^k$ onto the hyperplane $\{p:\varphi_k(p)=0\}$, subject to a
relaxation factor $\beta_k$. What makes projective splitting an operator
splitting method is that the hyperplane is constructed through individual
calculations on each operator $T_i$, either $\prox$ calculations or forward
steps. 

The hyperplane is defined in terms of the following affine function:
\begin{align}\label{defAffine}
\varphi_k(z,w_1,\ldots,w_{n-1}) &= \sum_{i=1}^{n-1}\inner{z-x^k_i}{y^k_i-w_i} 
+\Inner{z-x_i^n}{y_i^n + \sum_{i=1}^{n-1} w_i}.
\end{align} 
See \cite[Lemma 4]{johnstone2018projective} for the relevent properties of
$\varphi_k$. As in~\cite{johnstone2018projective}, we use the following inner
product and norm for $\boldsymbol{\calH}$, for an arbitrary scalar $\gamma > 0$:
\begin{align*} 
\Inner{(z^1,\bw^1)}{(z^2,\bw^2)}_\gamma &= 
\gamma\langle z^1,z^2\rangle + \textstyle{\sum_{i=1}^{n-1}\langle
w^1_i,w^2_i\rangle}
\\
\norm{(z,\bw)}_\gamma^2 &= \Inner{(z,\bw)}{(z,\bw)}_\gamma.
\end{align*}
Note that with this inner product it was shown in \cite[Lemma 4]{johnstone2018projective} that 
\begin{align}
\label{defGrad}
\nabla\varphi_k =
\left(\frac{1}{\gamma}\left(\sum_{i=1}^{n-1} G_i^* y_i^k+ y_n^k\right) \!\!,\;
x_1^k - G_1 x_{n}^k,x_2^k - G_2 x_{n}^k,\ldots,x_{n-1}^k - G_{n-1} x_{n}^k\right).
\end{align}
The scalar $\gamma>0$ controls the relative emphasis on the
primal and dual variables in the projection update in lines
\ref{eqAlgproj1}-\ref{eqAlgproj2}.

Algorithm \ref{AlgfullyAsync} has the following parameters:
\begin{itemize}
	\item For each iteration $k\geq 1$, a subset $I_k\subseteq \{1,\ldots,n\}$.
	\item For each $k\geq 1$ and $i=1,\ldots,n$, a positive scalar 
	stepsize $\rho_{i}^{k}$. For $i\in\Iforw$, $\rho_i^{k}$ is the initial stepsize tried in the backtracking linesearch while $\hat{\rho}_i^{k}$ is the accepted stepsize. 
	\item A constant $\nu\in(0,1)$ controlling how much the stepsize is decreased at each iteration of the backtracking linesearch. 
	\item For each iteration $k\geq 1$ and $i=1,\ldots,n$, a delayed 
	iteration index $d(i,k)\in\{1,\ldots,k\}$ which allows the subproblem calculations to use outdated information. 
	\item For each iteration $k\geq 1$, an overrelaxation parameter
	$\beta_k\in [\underline{\beta},\overline{\beta}]$ for some constants
	$0<\underline{\beta}\leq\overline{\beta}<2$.
	see \cite{johnstone2018projective} for more details. 
	\item Sequences of errors $\{e_i^k\}_{k\geq 1}$ for $i\in\Iback$, allowing
	us to model inexact computation of the proximal steps.
\end{itemize}

\longversion{
There are many ways in which Algorithm~\ref{AlgfullyAsync} could be
implemented in various parallel computing environments. We refer to
\cite{johnstone2018projective} for a more thorough discussion.
}

\begin{algorithm}
\DontPrintSemicolon
\longversion{\algsetup{linenosize=\tiny} \footnotesize \setstretch{0.9}}
\SetKwInOut{Input}{Input}
\Input{$(z^1,{\bf w}^1)\in \boldsymbol{\mathcal{H}}$,
$(x_i^0,y_i^0)\in\mathcal{H}_i^2$ for $i=1,\ldots,n$,
$0<\underline{\beta}\leq\overline{\beta}<2$, $\gamma>0$, $\nu\in(0,1)$.}
\For{$k=1,2,\ldots$}
{   
	\For{$i=1,2,\ldots, n$}
	{
		\If{$i\in I_k$}
		{
			\If{$i\in\Iback$}
			{
			    	$a = G_i z^{d(i,k)}+\rho_{i}^{d(i,k)} w_i^{d(i,k)}+e_i^k$\label{lineaupdate}\;
			    $x_i^k = \prox_{\rho_{i}^{d(i,k)} T_i}(a)
			    $\label{LinebackwardUpdate}\;
			    $
			    y_i^k = (\rho_{i}^{d(i,k)})^{-1}
			    \left(
			    a - x_i^k
			    \right)
			    $\label{lineBackwardUpdateY}\;

		    }
		    \Else
		    {
				$\rho_i^{(1,k)} \leftarrow \rho_i^{d(i,k)}$\label{linebegintrack}\;
			    $\theta_i^k = G_{i}z^{d(i,k)}$ \label{lineTheta}\;
    			$\zeta_i^k = T_i \theta_i^k$   \label{lineZeta}  \;
    			\If{$\zeta_i^k = w_i^{d(i,k)}$}
    			{ 
    			$\hat{\rho}_i^{d(i,k)} \leftarrow \rho_i^{(j,k)},\;
    			x_i^k \leftarrow \theta_i^k,\;y_i^k \leftarrow \zeta_i^k$\label{lineQuickStop}\;
    		    }
    	        \Else 
    	        {
		     		\For{$j=1,2,\ldots$}{\label{lineStartFor}
			    		$\tilde{x}_i^{(j,k)} 
				    	= \theta_i^k - \rho_i^{(j,k)}(\zeta_i^k - w_i^{d(i,k)})$ \label{lineX}\;
	    				$\tilde{y}^{(j,k)}_i=T_i\tilde{x}^{(j,k)}_i$ \label{lineY}\;
	    				\If{\label{lineIf}$\Delta\|\theta_i^k-\tilde{x}_i^{(j,k)}\|^2 -
						\langle \theta_i^k-\tilde{x}_i^{(j,k)},\tilde{y}_i^{(j,k)}-w_i^{d(i,k)}\rangle
						\leq 0$}{
						 $
							\hat{\rho}_i^{d(i,k)} \leftarrow \rho_i^{(j,k)}, \;
							x_i^k \leftarrow \tilde{x}_i^{{(j,k)}}, \;
							y_i^k \leftarrow \tilde{y}_i^{{(j,k)}}$\;
							{\bf break}\;
						\label{lineBTreturn}
					}
					$\rho_i^{(j+1,k)} = \nu\rho_i^{(j,k)}$\label{lineendtrack}\;
			        }
		        $\hat{\rho}_i^{d(i,k)} \leftarrow \rho_i^{(j,k)}, x_i^k \leftarrow \tilde{x}_i^{(j,k)},y_i^k \leftarrow \tilde{y}_i^{(j,k)}$\label{lineendtracktrue}\;
			}

		    }
        }
	    \Else 
		{
			$(x_i^k,y_i^k)=(x_i^{k-1},y_i^{k-1})$
		}
	}
  $u_i^k = x_i^k - G_i x_n^k,\quad i=1,\ldots,n-1,$\label{lineCoordStart}\;
	$v^k = \sum_{i=1}^{n-1} G_i^* y_i^k+y_n^k$\label{lineVupdate}\;    
	$\pi_k = \|u^k\|^2+\gamma^{-1}\|v^k\|^2$  \label{linePiUpdate}\;    
	\eIf{$\pi_k>0$}{
		Choose some $\beta_k \in [\underline{\beta},\overline{\beta}]$\;  \label{lineHplane} 
		$\varphi_k(p^k) = 
		\langle z^k, v^k\rangle 
		+
		\sum_{i=1}^{n-1}
		\langle w_i^k,u_{i}^k\rangle 
		-
		\sum_{i=1}^{n}
		\langle x_i^k,y_i^k\rangle  
		$\label{lineComputeHplane}\;
		$\alpha_k = \frac{\beta_k}{\pi_k}\max\left\{0,\varphi_k(p^k)\right\}
		$\;
	}
	{
	 \If{$\cup_{j=1}^k I_j=\{1,\ldots,n\}$}
	 {
	 	\Return $z^{k+1}\leftarrow x_n^k, w_1^{k+1}\leftarrow y_1^k,\ldots,w_{n-1}^{k+1}\leftarrow y_{n-1}^k$\label{lineReturn}\;
	 }
	 \Else
	 {
	 	$\alpha_k = 0$\;
	 }	
	} 
$z^{k+1} = z^k - \gamma^{-1}\alpha_k v^k$\label{eqAlgproj1} \;
$w_i^{k+1} = w_i^k - \alpha_k u_{i}^k,\quad i=1,\ldots,n-1$,\label{eqAlgproj2} \;
$w_{n}^{k+1} = -\sum_{i=1}^{n-1} G_i^* w_{i}^{k+1}$\label{lineCoordEnd}\;
}
\caption{Asynchronous algorithm for solving~\eqref{probCompMonoMany}.}
\label{AlgfullyAsync}
\end{algorithm}

\newcommand{\bcH}{\boldsymbol{\mathcal{H}}}

\longversion{
Our main assumptions regarding~\eqref{probCompMonoMany} are
as follows:
}
\begin{assumption}
	\label{AssMonoProb}\label{assMono}	
	Problem~(\ref{probCompMonoMany}) conforms to the following:
	\begin{enumerate}
		 \item $\mathcal{H}_0 = \mathcal{H}_n$ and
		 $\mathcal{H}_1,\ldots,\mathcal{H}_{n-1}$ are real Hilbert spaces.
		\item For $i=1,\ldots,n$  the operators
		$T_i:\mathcal{H}_{i}\to2^{\mathcal{H}_{i}}$ are monotone. 
		\item For all $i$ in some subset $\Iforw \subseteq \{1,\ldots,n\}$, $\mathcal{H}_i$ is finite-dimensional,
		the operator $T_i$ is
		 continuous with respect to the metric induced by $\|\cdot\|$ (and thus single-valued),
		and $\text{dom}(T_i) = \mathcal{H}_i$.  
		\item For $i \in
		\Iback \triangleq \{1,\ldots,n\} \backslash \Iforw$, the operator 
		$T_i$ is maximal and the map $\prox_{\rho
			T_i}:\mathcal{H}_i\to\mathcal{H}_i$ 
 can be computed 
to within the error tolerance specified below in Assumption \ref{assErr}. 
		\item Each $G_i:\calH_{0}\to\calH_i$ for $i=1,\ldots,n-1$ is
		linear and bounded. 
		\item The solution set $\calS$ defined in
		(\ref{defCompExtSol}) is nonempty.
	\end{enumerate}
\end{assumption}


Our assumptions regarding the parameters of Algorithm \ref{AlgfullyAsync} are
as follows, and are the same as used
in~\cite{combettes2016async,eckstein2017simplified,johnstone2018projective}. 
\begin{assumption}\label{assAsync}
	For Algorithm \ref{AlgfullyAsync}, assume:
	\begin{enumerate}
		\item For some fixed integer $M\geq 1$, each index $i$ in $1,\ldots,n$
		is in $I_k$ at least once every $M$ iterations, that is, 
		$
		\bigcup_{k=j}^{j+M-1}I_k
		    = \{1,\ldots,n\}
		$
		for all $i=1,\ldots,n$ and $j \geq 1$.
		\item For some fixed integer $D\geq 0$, we have $k-d(i,k)\leq D$ for
		all $i, k$ with $i\in I_k$. 
	\end{enumerate}
\end{assumption}
We also use the following additional notation
from~\cite{eckstein2017simplified}: for all $i$ and $k$, define
\begin{align*}
\nonumber
S(i,k)
&=
\{j\in\mathbb{N}:j\leq k,i\in I_j \}
&
s(i,k)
&=
\left\{
\begin{array}{ll}
\max S(i,k),\quad& \text{when } S(i,k)\neq \emptyset\\
0,&\text{otherwise.}
\end{array}
\right.
\end{align*}
Essentially, $s(i,k)$ is the most recent iteration up to and including $k$ in
which the index-$i$ information in the separator was updated.
Assumption \ref{assAsync}
ensures that $0\leq k-s(i,k)< M$.
For all $i=1,\ldots,n$ and iterations $k$, also define
$
l(i,k) = d(i,s(i,k)),
$
the iteration in which the algorithm generated the
information $z^{l(i,k)}$ and $w_{i}^{l(i,k)}$ used to compute the current
point $(x_i^k,y_i^k)$. Regarding initialization, we set $d(i,0) = 0$; note that
the initial points $(x_i^0,y_i^0)$ are arbitrary.
%
We formalize the use of $l(i,k)$ in the following Lemma 
from \cite{johnstone2018projective}:
\begin{lemma}
	\label{lemUpdates}
Suppose Assumption \ref{assAsync}(1) holds. For all iterations $k\geq  M$ if Algorithm \ref{AlgfullyAsync} has not already terminated, the
updates can be written as
\begin{align}
(\forall i\in \Iback) &&&
x^k_i+\rho_i^{l(i,k)}y^k_i 
=
G_i z^{l(i,k)}+\rho^{l(i,k)}_iw_i^{l(i,k)}+e_i^{s(i,k)}
&
y^k_i  &\in T_i x^k_i
\label{eqBack}	
\\
(\forall i\in\Iforw) &&&
x^k_{i} 
=
G_{i} z^{l(i,k)} - \hat{\rho}^{l(i,k)}_{i}\big(T_{i} G_{i} z^{l(i,k)} - w^{l(i,k)}_{i}\big)
&
y^k_{i} 
&=
T_i x^k_{i}.
\label{eqXForw}
\end{align}
\end{lemma}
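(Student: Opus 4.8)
The statement is pure bookkeeping about which data the delayed updates are based on, so the plan is to track, for each fixed index $i$, the most recent iteration at which the pair $(x_i^k,y_i^k)$ was genuinely recomputed, and then to read off the update rule executed there. First I would check that $s(i,k)$ is well defined for $k\geq M$: Assumption~\ref{assAsync}(1) with $j=1$ gives $\bigcup_{j=1}^{M}I_j=\{1,\ldots,n\}$, so $i\in I_j$ for some $1\leq j\leq M\leq k$; hence $S(i,k)\neq\emptyset$ and $s(i,k)\geq 1$. Consequently $l(i,k)=d(i,s(i,k))\geq 1$, so none of the reasoning touches the arbitrary initial pairs $(x_i^0,y_i^0)$. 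The hypothesis that the algorithm has not already terminated is used only to guarantee that iterates $s(i,k),\ldots,k$ are all defined; indeed the algorithm can halt only through line~\ref{lineReturn}, and when $\pi_k=0$ but the accompanying test fails it merely sets $\alpha_k=0$ and continues, which affects neither $(x_i^k,y_i^k)$ nor $s(i,k)$.

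Next I would show $(x_i^k,y_i^k)=(x_i^{s(i,k)},y_i^{s(i,k)})$. By maximality of $s(i,k)$ in $S(i,k)$, every $j$ with $s(i,k)<j\leq k$ satisfies $i\notin I_j$, so at iteration $j$ the inner loop executes its else branch $(x_i^j,y_i^j)=(x_i^{j-1},y_i^{j-1})$; a one-line induction on $j$ from $s(i,k)$ to $k$ closes this step.

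Third, I would unpack iteration $s(i,k)$ itself, at which $i\in I_{s(i,k)}$. If $i\in\Iback$, lines~\ref{lineaupdate}--\ref{lineBackwardUpdateY} run with delayed index $l(i,k)=d(i,s(i,k))$ and error $e_i^{s(i,k)}$; writing $a=G_iz^{l(i,k)}+\rho_i^{l(i,k)}w_i^{l(i,k)}+e_i^{s(i,k)}$ and $x_i^{s(i,k)}=\prox_{\rho_i^{l(i,k)}T_i}(a)$, the defining property of the proximal map yields $y_i^{s(i,k)}\triangleq(\rho_i^{l(i,k)})^{-1}(a-x_i^{s(i,k)})\in T_ix_i^{s(i,k)}$ together with $x_i^{s(i,k)}+\rho_i^{l(i,k)}y_i^{s(i,k)}=a$; invoking the second step to replace $s(i,k)$ by $k$ gives exactly~\eqref{eqBack}. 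If $i\in\Iforw$, lines~\ref{linebegintrack}--\ref{lineendtracktrue} run, and in each of the three ways they can return an accepted pair --- the quick stop of line~\ref{lineQuickStop}, the break at line~\ref{lineBTreturn}, or the fall-through of line~\ref{lineendtracktrue} --- one reads off $\theta_i^{s(i,k)}=G_iz^{l(i,k)}$, $\zeta_i^{s(i,k)}=T_iG_iz^{l(i,k)}$, $y_i^{s(i,k)}=T_ix_i^{s(i,k)}$, and $x_i^{s(i,k)}=\theta_i^{s(i,k)}-\hat{\rho}_i^{l(i,k)}(\zeta_i^{s(i,k)}-w_i^{l(i,k)})$, where in the quick-stop case the bracketed vector is $0$ so the formula still holds; combined with the second step this is~\eqref{eqXForw}.

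There is no genuine analytic obstacle; the only places needing care are confirming that all three exit paths of the forward-step linesearch collapse to the single formula in~\eqref{eqXForw}, and keeping the three nested indices $k$, $s(i,k)$, and $l(i,k)$ straight throughout. Finite termination of the backtracking loop --- which is what actually makes iterate $k$ exist when $i\in\Iforw$ --- is not part of this statement and is handled elsewhere.
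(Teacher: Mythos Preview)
Your proof sketch is correct and is precisely the natural bookkeeping argument one would expect: verify $s(i,k)\geq 1$ from Assumption~\ref{assAsync}(1), propagate $(x_i^k,y_i^k)=(x_i^{s(i,k)},y_i^{s(i,k)})$ through the else branch, and then read off the update executed at iteration $s(i,k)$ in each of the backward and forward cases. The paper's own proof simply cites the corresponding lemma from~\cite{johnstone2018projective} and remarks that the only change is the way $\hat{\rho}_i^k$ is produced by the backtracking routine; your argument is the self-contained version of that same reasoning, with the added care of checking that all exit paths of the linesearch collapse to the single formula~\eqref{eqXForw}.
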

\begin{proof}
Follows directly from~\cite[Lemma 6]{johnstone2018projective}, in view of how
$\hat{\rho}_i^k$ is calculated in Algorithm~\ref{AlgfullyAsync}. \myqed
\end{proof} 
Since Algorithm~\ref{AlgfullyAsync} is a projection method, it satisfies the
following lemma, identical to~\cite[Lemmas 2 and 6]{johnstone2018projective}:
\begin{lemma} 
	\label{propFejer}
	Suppose assumptions \ref{assMono} and \ref{assAsync}(1) hold. Then 
	for Algorithm \ref{AlgfullyAsync} 
\begin{enumerate}
		\item The sequence $\{p^k\}$ generated by Algorithm \ref{AlgfullyAsync} is bounded.
		\item\label{eqDif20} If Algorithm \ref{AlgfullyAsync} runs
		indefinitely, then $\|p^k - p^{k+1}\|\to 0$.
		\item\label{eqUpdateRewrite} Lines \ref{eqAlgproj1} and
		\ref{eqAlgproj2} may be written as
		\begin{align*}
		p^{k+1} 
		= p^k - \frac{\beta_k\max\{\varphi_k(p^k),0\}}
		{\|\nabla\varphi_k\|_\gamma^2}\nabla\varphi_k.
		\end{align*}
	\end{enumerate}
\end{lemma}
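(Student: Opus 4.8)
The plan is to view Algorithm \ref{AlgfullyAsync} as a concrete instance of the abstract separator--projector scheme and to extract all three assertions from the properties of the affine function $\varphi_k$ collected in \cite[Lemma 4]{johnstone2018projective} together with the update identities of Lemma \ref{lemUpdates}; the argument is essentially the one from \cite[Lemmas 2 and 6]{johnstone2018projective}.

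I would dispatch part 3 first, since it is purely algebraic. By the definitions of $u_i^k$ (line \ref{lineCoordStart}) and $v^k$ (line \ref{lineVupdate}), formula \eqref{defGrad} reads $\nabla\varphi_k=(\gamma^{-1}v^k,u_1^k,\ldots,u_{n-1}^k)$, so $\|\nabla\varphi_k\|_\gamma^2=\gamma^{-1}\|v^k\|^2+\|u^k\|^2=\pi_k$ by line \ref{linePiUpdate}, while $\varphi_k(p^k)$ as computed in line \ref{lineComputeHplane} coincides with \eqref{defAffine} evaluated at $p^k$, again by \cite[Lemma 4]{johnstone2018projective}. Substituting $\alpha_k=\beta_k\|\nabla\varphi_k\|_\gamma^{-2}\max\{\varphi_k(p^k),0\}$ into lines \ref{eqAlgproj1}--\ref{eqAlgproj2} gives $p^{k+1}=p^k-\alpha_k\nabla\varphi_k$, which is precisely the claimed formula; in the degenerate case $\pi_k=0$ without termination one has $\alpha_k=0$ and $\nabla\varphi_k=0$, and since $\calS\neq\emptyset$ and $\varphi_k$ is then constant and $\le 0$ on $\calS$, the formula persists under the convention that the fraction vanishes when its denominator does.

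For parts 1 and 2 I would derive the Fej\'er inequality. Fix $p^\star\in\calS$ (nonempty by Assumption \ref{assMono}). Since $\varphi_k$ is affine with $\gamma$-gradient $\nabla\varphi_k$ and, for $k\ge M$, $\varphi_k(p^\star)\le 0$ --- the separation property of \cite[Lemma 4]{johnstone2018projective}, which rests on monotonicity of each $T_i$ and the inclusions $y_i^k\in T_i x_i^k$ supplied by Lemma \ref{lemUpdates} --- we get $\langle\nabla\varphi_k,p^k-p^\star\rangle_\gamma=\varphi_k(p^k)-\varphi_k(p^\star)\ge\varphi_k(p^k)$, and expanding $\|p^{k+1}-p^\star\|_\gamma^2$ with part 3 yields
\[
\|p^{k+1}-p^\star\|_\gamma^2\;\le\;\|p^k-p^\star\|_\gamma^2-\beta_k(2-\beta_k)\,\frac{\max\{\varphi_k(p^k),0\}^2}{\|\nabla\varphi_k\|_\gamma^2}
\]
(an equality, trivially, when $\varphi_k(p^k)\le 0$, since then $p^{k+1}=p^k$). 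As $\beta_k\in[\underline{\beta},\overline{\beta}]\subset(0,2)$ the subtracted term is $\ge 0$, so $\{\|p^k-p^\star\|_\gamma\}$ is eventually nonincreasing; combined with the trivial boundedness of the finitely many iterates $p^1,\ldots,p^M$ and the equivalence of $\|\cdot\|_\gamma$ with the product norm, this gives part 1. Telescoping the inequality shows $\sum_k\beta_k(2-\beta_k)\max\{\varphi_k(p^k),0\}^2/\|\nabla\varphi_k\|_\gamma^2<\infty$, hence the summand vanishes, and $\beta_k(2-\beta_k)\ge\underline{\beta}(2-\overline{\beta})>0$ together with part 3 gives $\|p^{k+1}-p^k\|_\gamma^2=\beta_k^2\max\{\varphi_k(p^k),0\}^2/\|\nabla\varphi_k\|_\gamma^2\to 0$, which is part 2.

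The only points to watch are that Lemma \ref{lemUpdates}, and hence $\varphi_k(p^\star)\le 0$, are guaranteed only for $k\ge M$ (so one runs the Fej\'er argument from iteration $M$ onward, the earlier iterates being finitely many and bounded), and that the quotients above require $\pi_k=\|\nabla\varphi_k\|_\gamma^2>0$ --- but since $\bigcup_{j=1}^k I_j=\{1,\ldots,n\}$ once $k\ge M$ by Assumption \ref{assAsync}(1), any later iteration with $\pi_k=0$ triggers a return in line \ref{lineReturn}, so $\pi_k>0$ throughout if the algorithm does not terminate. I do not anticipate a genuine obstacle in this lemma: all of its substance is absorbed into the separation property borrowed from \cite[Lemma 4]{johnstone2018projective}, the remainder being standard relaxed-projection bookkeeping; the delicate analysis peculiar to the merely-continuous setting --- where the accepted stepsizes $\hat{\rho}_i^k$ need not stay bounded away from $0$ --- is postponed to the main convergence argument in Section \ref{secMain}.
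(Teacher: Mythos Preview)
Your proposal is correct and follows the same approach as the paper, which simply invokes \cite[Lemmas 2 and 6]{johnstone2018projective} without further argument; you have supplied the standard relaxed-projection/Fej\'er details underlying those citations, including the careful handling of the first $M$ iterations and of the $\pi_k=0$ case.
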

The assumptions regarding the proximal operator
evaluation errors are identical to those in~\cite{eckstein2017simplified}:
\begin{assumption}\label{assErr}
	The error sequences $\{\|e_i^k\|\}$ are bounded for all $i\in\Iback$. For some $\sigma$
	with $0\leq\sigma<1$ the following hold for all
	$k \geq 1$ such that Algorithm \ref{AlgfullyAsync} has not yet terminated:
\begin{align}
(\forall i\in\Iback)&&
	\langle G_i z^{l(i,k)} - x_i^k,e_i^{s(i,k)}\rangle
	&\geq
	-\sigma\|G_i z^{l(i,k)} - x_i^k\|^2
	\label{err1}\\
(\forall i\in \Iback)&&
	\langle e_i^{s(i,k)},y_i^k - w_i^{l(i,k)}\rangle 
	&\leq \rho^{l(i,k)}_i\sigma \|y_i^k - w_i^{l(i,k)}\|^2.
	\label{err2}
\end{align}
\end{assumption}
The stepsize assumptions differ from
\cite{johnstone2018projective,johnstone2018convergence} for $i\in\Iforw$ in
that we no longer assume Lipschitz continuity nor that the stepsizes are
bounded by the inverse of the Lipschitz constant. However, the initial trial
stepsize for the backtracking linesearch at each iteration is assumed to be
bounded from above and below:
\begin{assumption}\label{assStep}
In Algorithm~\ref{AlgfullyAsync},
\begin{align}\nonumber
\underline{\rho} &\triangleq 
    \min_{i=1,\ldots,n} \left\{ \inf_{k\geq 1} \rho_i^k\right\} > 0 &
\overline{\rho} &\triangleq
    \max_{i=1,\ldots,n} \left\{\sup_{k\geq 1} \rho_i^k \right\} < \infty.
\end{align}
\end{assumption}

\section{Weak Convergence to a Solution}\label{secMain}
\begin{lemma}
Suppose assumptions \ref{assMono}--\ref{assStep} hold. Then for all $k\in\rN$
and $i\in I_k$ such that Algorithm \ref{AlgfullyAsync} has not yet terminated,
the backtracking linesearch on lines \ref{linebegintrack}--\ref{lineendtracktrue}
terminates in a finite number of iterations. \label{lemFinite}
\end{lemma}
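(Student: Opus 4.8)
The plan is to argue by contradiction using only the continuity of $T_i$. Fix an iteration $k$ and an index $i\in\Iforw\cap I_k$, since the backtracking loop on lines \ref{linebegintrack}--\ref{lineendtracktrue} is executed only in this case, and abbreviate $\theta=\theta_i^k=G_iz^{d(i,k)}$, $w=w_i^{d(i,k)}$, $\zeta=\zeta_i^k=T_i\theta$, and $\rho_j=\rho_i^{(j,k)}$. Because the algorithm reaches the \textbf{for} loop on line \ref{lineStartFor}, the test $\zeta_i^k=w_i^{d(i,k)}$ guarding line \ref{lineQuickStop} must have failed, so $\zeta\neq w$ and therefore $\|\zeta-w\|>0$; this is exactly what will keep a certain limit strictly positive below. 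Assume, for contradiction, that the loop never executes the \textbf{break} on line \ref{lineBTreturn}, so that $\tilde{x}_i^{(j,k)}$ and $\tilde{y}_i^{(j,k)}$ are generated for every $j\in\mathbb{N}$.

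First I would record the behaviour of the trial stepsizes and points as $j\to\infty$. Line \ref{lineendtrack} gives $\rho_{j+1}=\nu\rho_j$, hence $\rho_j=\nu^{j-1}\rho_i^{d(i,k)}\to 0$, since $\nu\in(0,1)$ and $0<\rho_i^{d(i,k)}\leq\overline{\rho}<\infty$ by Assumption \ref{assStep}. From line \ref{lineX},
\begin{align*}
\theta-\tilde{x}_i^{(j,k)}=\rho_j(\zeta-w),
\end{align*}
so $\tilde{x}_i^{(j,k)}\to\theta$. Since $T_i$ is single-valued and continuous with $\dom(T_i)=\calH_i$ by Assumption \ref{assMono}(3) --- the one place this hypothesis enters --- it follows that $\tilde{y}_i^{(j,k)}=T_i\tilde{x}_i^{(j,k)}\to T_i\theta=\zeta$.

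Next I would rewrite the acceptance test on line \ref{lineIf}. Substituting $\theta-\tilde{x}_i^{(j,k)}=\rho_j(\zeta-w)$ and dividing by $\rho_j>0$, that test is equivalent to
\begin{align*}
\langle\zeta-w,\tilde{y}_i^{(j,k)}-w\rangle\geq\Delta\rho_j\|\zeta-w\|^2.
\end{align*}
By the previous step the left-hand side tends to $\langle\zeta-w,\zeta-w\rangle=\|\zeta-w\|^2>0$, while the right-hand side tends to $0$. Hence the inequality holds for all $j$ large enough, so the loop takes the \textbf{break} --- contradicting the assumption that it runs forever. Therefore the linesearch terminates in finitely many steps, which proves the lemma.

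The only real obstacle is making the limiting argument airtight, and it rests entirely on the continuity hypothesis: one needs $T_i$ continuous on all of $\calH_i$ to pass from $\tilde{x}_i^{(j,k)}\to\theta$ to $\tilde{y}_i^{(j,k)}\to\zeta$, and one needs the degenerate case $\zeta=w$ to have been diverted to the quick-stop on lines \ref{lineZeta}--\ref{lineQuickStop} so that $\|\zeta-w\|^2$ is genuinely positive. Everything else is bookkeeping. Note that this proof yields no lower bound on the accepted stepsize $\hat{\rho}_i^{d(i,k)}$ across iterations $k$, which is precisely the difficulty the subsequent convergence analysis must overcome.
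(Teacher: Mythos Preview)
Your proof is correct and follows the same underlying idea as the paper: substitute $\theta-\tilde{x}_i^{(j,k)}=\rho_j(\zeta-w)$ into the acceptance test, let $\rho_j\to 0$, and use continuity of $T_i$ to make $\tilde{y}_i^{(j,k)}\to\zeta$. The execution differs slightly: the paper divides the test by $\|\theta-\tilde{x}_i^{(j,k)}\|^2$, expands the numerator, applies Cauchy--Schwarz, and shows the resulting lower bound tends to $+\infty$; you instead divide only by $\rho_j$, which leaves a finite positive limit $\|\zeta-w\|^2$ on the left against a vanishing right-hand side. Your normalization is a bit cleaner and sidesteps the Cauchy--Schwarz step, but the two arguments are otherwise the same.
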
 
\begin{proof}
For $k\geq M$, consider any $i\in\Iforw \cap I_k$ and assume that $T_i G_i
z^{l(i,k)}\neq w_i^{l(i,k)}$, since backtracking otherwise terminates
immediately at line \ref{lineQuickStop}. Using the definitions of $s(i,k)$
and $l(i,k)$ and some algebraic manipulation, the condition for terminating
the backtracking linesearch given on line~\ref{lineIf} may be written as:
\begin{align}
\frac{
	\langle G_i z^{l(i,k)} - \tilde{x}_i^{(j,s(i,k))},\tilde{y}_i^{(j,s(i,k))} - w_i^{l(i,k)}\rangle 
}{\|G_i z^{l(i,k)} - \tilde{x}_i^{(j,s(i,k))}\|^2}
\geq
\Delta.\label{eqLineScond}
\end{align}
For brevity, let  $\rho = \rho_i^{(j,s(i,k))} > 0$. Using lines
\ref{lineTheta}, \ref{lineZeta}, \ref{lineX}, and \ref{lineY}, the left-hand
side of \eqref{eqLineScond} may be written
\begin{align}
&\frac{
	\Big\langle
	T_iG_i z^{l(i,k)} - w_i^{l(i,k)},
	T_i\big(G_i z^{l(i,k)} - \rho(T_iG_i z^{l(i,k)} - w_i^{l(i,k)})\big) - w_i^{l(i,k)} 
	\Big\rangle
}{\rho\|T_iG_i z^{l(i,k)} - w_i^{l(i,k)}\|^2}.\label{eqFrac}
\end{align}
The numerator of this fraction may be expressed as
\begin{multline*}
\Big\langle
T_iG_i z^{l(i,k)} - w_i^{l(i,k)},
T_i\big(G_i z^{l(i,k)} - \rho(T_iG_i z^{l(i,k)} - w_i^{l(i,k)})\big) 
- 
T_i G_i z^{l(i,k)}
\Big\rangle
\\
+\|T_iG_i z^{l(i,k)} - w_i^{l(i,k)}\|^2.
\end{multline*}
Substituting this expression into \eqref{eqFrac} and applying the
Cauchy-Schwarz inequality to the inner product yields that the left-hand size
of \eqref{eqLineScond} is lower bounded by
\begin{align}
\label{eq2pass}
\frac{ 
	\|T_iG_i z^{l(i,k)} - w_i^{l(i,k)}\|
	- 
	\big\|T_i(G_i z^{l(i,k)} 
	    - \rho\big(T_iG_i z^{l(i,k)} - w_i^{l(i,k)})\big) - T_iG_i z^{l(i,k)}\big\|
}
{\rho\|T_iG_i z^{l(i,k)}-w_i^{l(i,k)}\|.}
\end{align}
The continuity of $T_i$ implies that the above expression tends to $+\infty$
as $\rho\to0$.  Since $\rho_j^{(j,k)}$ decreases geometrically to $0$ with $j$ on
line~\ref{lineendtrack}, it follows that~\eqref{eqLineScond} must eventually
hold.
\myqed
\end{proof}

\begin{lemma}
Suppose assumptions \ref{assMono}--\ref{assStep} hold, Algorithm
\ref{AlgfullyAsync} produces an infinite sequence of iterates, and both
\begin{enumerate}
	\item $G_i z^{l(i,k)} - x_i^k\to 0$ for all $i=1,\ldots,n$\label{lemEas}
	\item $y_i^k-w_i^{l(i,k)}\to 0$ for all $i=1\ldots,n$.\label{lempointhard}
\end{enumerate}
Then the
sequence $\{(z^k,\bw^k)\}$ generated by Algorithm~\ref{AlgfullyAsync}
converges weakly to some point $(\bar z,\overline{\bw})$ in the extended
solution set $\calS$ of~\eqref{probCompMonoMany} defined
in~\eqref{defCompExtSol}. Furthermore, $x_i^k\rightharpoonup G_i \bar z$
and $y_i^k\rightharpoonup \overline{w}_i$ for all $i=1,\ldots,n-1$,
$x_{n}^k\rightharpoonup \bar z$, and $y_n^k\rightharpoonup
-\sum_{i=1}^{n-1}G_i^* \overline{w}_i$.
\label{lemSumOld}
\end{lemma}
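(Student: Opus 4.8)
The plan is to follow the standard template for convergence of separator-projector methods, exploiting the Fej\'er-type properties already established in Lemma~\ref{propFejer}. Since $\{p^k\}$ is bounded (Lemma~\ref{propFejer}(1)), it has weak cluster points. First I would extract a subsequence $p^{k_\ell}=(z^{k_\ell},\bw^{k_\ell})\rightharpoonup(\bar z,\overline{\bw})$ and show that this limit lies in $\calS$. The key is that hypotheses~\ref{lemEas} and~\ref{lempointhard}, combined with Assumption~\ref{assAsync} (which guarantees $0\le k-s(i,k)<M$ and bounded delays $k-d(i,k)\le D$), let me relate the ``anchor'' points $G_i z^{l(i,k)}$ and $w_i^{l(i,k)}$ to the current iterate. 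Specifically, because $\|p^k-p^{k+1}\|\to 0$ (Lemma~\ref{propFejer}(2)) and the indices $l(i,k)$ differ from $k$ by at most $M+D$, a telescoping argument gives $z^{l(i,k)}-z^k\to 0$ and $w_i^{l(i,k)}-w_i^k\to 0$. Chaining with hypothesis~\ref{lemEas} yields $x_i^k-G_i z^k\to 0$, hence $x_i^{k_\ell}\rightharpoonup G_i\bar z$ (using weak continuity of the bounded linear $G_i$); similarly $y_i^{k_\ell}\rightharpoonup \overline{w}_i$ from hypothesis~\ref{lempointhard}, and $x_n^{k_\ell}\rightharpoonup\bar z$, $y_n^{k_\ell}\rightharpoonup -\sum_i G_i^*\overline{w}_i$ using the definition $w_n^k=-\sum_i G_i^* w_i^k$ from~\eqref{defwnpm} together with line~\ref{lineVupdate}.

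Next I would verify $(\bar z,\overline{\bw})\in\calS$, i.e. $\overline{w}_i\in T_i(G_i\bar z)$ for $i<n$ and $-\sum_i G_i^*\overline{w}_i\in T_n(\bar z)$. For $i\in\Iback$ this is the familiar maximal-monotone argument: from~\eqref{eqBack}, $y_i^k\in T_i x_i^k$, and we have pairs $(x_i^{k_\ell},y_i^{k_\ell})\to(G_i\bar z,\overline{w}_i)$ weakly with the right asymptotic relations, so demiclosedness of the graph of the maximal monotone $T_i$ gives the inclusion (the bounded errors $e_i^k$ and Assumption~\ref{assErr} are needed to control the residuals). For $i\in\Iforw$ the situation is actually simpler in finite dimension: $\mathcal{H}_i$ is finite-dimensional, so $x_i^{k_\ell}\to G_i\bar z$ \emph{strongly}, and continuity of $T_i$ (Assumption~\ref{assMono}(3)) gives $y_i^{k_\ell}=T_i x_i^{k_\ell}\to T_i(G_i\bar z)$; combined with $y_i^{k_\ell}\rightharpoonup\overline{w}_i$ this forces $\overline{w}_i=T_i(G_i\bar z)$.

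Finally, to upgrade from subsequential to full weak convergence I would invoke the standard Opial-type argument for Fej\'er-monotone sequences: Lemma~\ref{propFejer} shows $\{p^k\}$ is Fej\'er monotone with respect to $\calS$ (each step is a relaxed projection onto a halfspace containing $\calS$, so $\|p^{k+1}-p^\star\|_\gamma\le\|p^k-p^\star\|_\gamma$ for every $p^\star\in\calS$), hence $\|p^k-p^\star\|_\gamma$ converges for each $p^\star\in\calS$. Since we have just shown every weak cluster point of $\{p^k\}$ lies in $\calS$, Opial's lemma yields that the whole sequence converges weakly to a single point $(\bar z,\overline{\bw})\in\calS$. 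The componentwise convergence statements for $x_i^k,y_i^k$ then follow from the asymptotic identities $x_i^k-G_i z^k\to 0$, $y_i^k-w_i^k\to 0$ established above, applied now to the full sequence rather than a subsequence.

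The main obstacle I anticipate is the bookkeeping in the first step: carefully justifying $z^{l(i,k)}-z^k\to 0$ and $w_i^{l(i,k)}-w_i^k\to 0$ from $\|p^k-p^{k+1}\|\to 0$ under the delay/block-iterative structure, since $l(i,k)=d(i,s(i,k))$ compounds two separate sources of lag, each bounded but by different constants ($M$ and $D$). Everything else is either a direct citation of Lemma~\ref{propFejer}, a routine demiclosedness/continuity argument, or the textbook Opial closure argument for Fej\'er-monotone sequences.
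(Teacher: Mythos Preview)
Your proposal is correct and follows the same route as the paper, which is considerably terser: the paper first records $z^{l(i,k)}-z^k\to 0$ and $w_i^{l(i,k)}-w_i^k\to 0$ by citing \cite[Lemma~9]{johnstone2018projective} (exactly your telescoping argument from Lemma~\ref{propFejer}(\ref{eqDif20}) and the bound $k-l(i,k)\le M+D$), combines these with the two hypotheses to obtain $G_iz^k-x_i^k\to 0$ and $y_i^k-w_i^k\to 0$, and then delegates the entire remainder to \cite[Theorem~1, part~3]{johnstone2018projective}, whose content is precisely the Fej\'er/Opial cluster-point analysis you spell out.

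One small caution in your expansion of that cited material: the graph of a maximal monotone operator is \emph{not} weak--weak sequentially closed in general, so for $i\in\Iback$ you cannot conclude $\overline{w}_i\in T_i(G_i\bar z)$ from ``demiclosedness'' operator by operator. The standard argument instead sums the monotonicity inequalities $\langle x_i^{k}-u_i,\,y_i^{k}-v_i\rangle\ge 0$ over all $i$ and uses the algebraic identity $\sum_{i=1}^n\langle G_i z^k,w_i^k\rangle=0$ (immediate from~\eqref{defwnpm}) together with the strong relations $x_i^k-G_iz^k\to0$, $y_i^k-w_i^k\to0$ to control the problematic bilinear term $\sum_i\langle x_i^k,y_i^k\rangle$; maximality of each $T_i$ then closes the argument. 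Also, your parenthetical about the errors $e_i^k$ is a red herring here: once Lemma~\ref{lemUpdates} guarantees $y_i^k\in T_i x_i^k$, Assumption~\ref{assErr} plays no further role in the cluster-point step.
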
 
\begin{proof}
First, note that 
$w_i^{l(i,k)}-w_i^k\to 0$ for all $i=1,\ldots,n$ and $z^{l(i,k)}-z^k \to
0$ \cite[Lemma 9]{johnstone2018projective}.
Combining $z^k - z^{l(i,k)}\to 0$ with point \eqref{lemEas} and the fact that $G_i$ is bounded, we obtain  that
$
G_iz^k - x_i^k\to 0
$
for $i=1,\ldots,n$. Similarly, combining $w_i^{l(i,k)}-w_i^k\to 0$ with point \eqref{lempointhard} we have 
$
y_i^k-w_i^{k}\to 0.
$
The proof is now identical to part 3 of the 
proof of \cite[Theorem~1]{johnstone2018projective}. \myqed
\end{proof} 
\longversion{Before commencing with the final two technical lemmas, we need two definitions.}
Define $\phi_k = \varphi_k(p^k)$ and 
\begin{align}
\label{defPsi}
(\forall i=1,\ldots,n) \quad
\psi_{ik} &\triangleq \langle G_i z^{l(i,k)} - x^k_i,y_i^k - w^{l(i,k)}_i\rangle
&
\psi_k &\triangleq \textstyle{\sum_{i=1}^{n}\psi_{ik}}.
\end{align}

\begin{lemma}\label{lemBound}
	Suppose assumptions \ref{assMono}--\ref{assStep} hold and that Algorithm
	\ref{AlgfullyAsync} produces an infinite sequence of iterates with
	$\{x_i^k\}$ and $\{y_i^k\}$ being bounded. Then, for all $i=1,\ldots,n$,
	it holds that $G_i z^{l(i,k)} - x_i^k\to 0$.
\end{lemma}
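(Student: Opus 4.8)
The plan is to exploit the separator--projector structure recorded in Lemma~\ref{propFejer}, in particular the fact that $\|p^k-p^{k+1}\|\to 0$, together with the explicit update formula $p^{k+1}=p^k-\beta_k\max\{\varphi_k(p^k),0\}\nabla\varphi_k/\|\nabla\varphi_k\|_\gamma^2$. From this and the bound $\beta_k\in[\underline\beta,\overline\beta]$ one reads off that $\max\{\phi_k,0\}\,\|\nabla\varphi_k\|_\gamma\to 0$, i.e. $\max\{\phi_k,0\}^2/\|\nabla\varphi_k\|_\gamma^2\to 0$. Since $\{x_i^k\},\{y_i^k\}$ and $\{p^k\}$ are bounded and the $G_i$ are bounded, $\|\nabla\varphi_k\|_\gamma$ is bounded; hence $\max\{\phi_k,0\}\to 0$. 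The next step is to expand $\phi_k=\varphi_k(p^k)$ using the definition~\eqref{defAffine} and the relations~\eqref{eqBack}--\eqref{eqXForw} of Lemma~\ref{lemUpdates}, rewriting it in the form $\phi_k=\psi_k+(\text{cross terms that vanish})$, or more precisely relating $\phi_k$ to $\psi_k$ plus quantities controlled by $z^{l(i,k)}-z^k$, $w_i^{l(i,k)}-w_i^k$, which tend to $0$ by \cite[Lemma~9]{johnstone2018projective}; this is exactly the kind of bookkeeping already done in \cite{johnstone2018projective}, so I would cite the corresponding identity there (the expression for $\varphi_k(p^k)$ in terms of $\psi_k$). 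The upshot should be $\limsup_k \psi_k \le 0$.

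The heart of the argument is then a lower bound on each $\psi_{ik}$ in terms of $\|G_i z^{l(i,k)}-x_i^k\|^2$ and $\|y_i^k-w_i^{l(i,k)}\|^2$. For $i\in\Iback$ this is standard: monotonicity of $T_i$ applied to the pair produced in~\eqref{eqBack}, combined with the error bounds~\eqref{err1}--\eqref{err2} of Assumption~\ref{assErr}, yields $\psi_{ik}\ge (1-\sigma)\|G_i z^{l(i,k)}-x_i^k\|^2 \ge 0$ (up to the usual rearrangement, and possibly also $\psi_{ik}\ge$ a positive multiple of $\rho_i^{l(i,k)}\|y_i^k-w_i^{l(i,k)}\|^2$). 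For $i\in\Iforw$, by~\eqref{eqXForw} we have $G_i z^{l(i,k)}-x_i^k=\hat\rho_i^{l(i,k)}(T_iG_iz^{l(i,k)}-w_i^{l(i,k)})$ and $y_i^k=T_ix_i^k$, so $\psi_{ik}=\hat\rho_i^{l(i,k)}\langle T_iG_iz^{l(i,k)}-w_i^{l(i,k)},\,T_ix_i^k-w_i^{l(i,k)}\rangle$; but the backtracking acceptance test on line~\ref{lineIf}, rewritten as in~\eqref{eqLineScond}, says precisely that $\langle G_iz^{l(i,k)}-x_i^k,\,y_i^k-w_i^{l(i,k)}\rangle\ge \Delta\|G_iz^{l(i,k)}-x_i^k\|^2$, i.e. $\psi_{ik}\ge \Delta\|G_iz^{l(i,k)}-x_i^k\|^2\ge 0$. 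Since every $\psi_{ik}\ge 0$ and $\sum_i\psi_{ik}=\psi_k$ with $\limsup_k\psi_k\le 0$, each $\psi_{ik}\to 0$, and then $\Delta\|G_iz^{l(i,k)}-x_i^k\|^2\le\psi_{ik}\to 0$ for $i\in\Iforw$ while $(1-\sigma)\|G_iz^{l(i,k)}-x_i^k\|^2\le\psi_{ik}\to 0$ for $i\in\Iback$, giving the claim for all $i=1,\dots,n$.

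The main obstacle is not any single inequality but the interaction of two features: the delayed indices $l(i,k)$ and $s(i,k)$, and the fact that $\hat\rho_i^{l(i,k)}$ is only known to lie in $(0,\overline\rho]$ and may approach $0$. The delay is handled by the asymptotic-vanishing estimates $z^{l(i,k)}-z^k\to0$, $w_i^{l(i,k)}-w_i^k\to0$ already available from \cite[Lemma~9]{johnstone2018projective}, which let me pass freely between $(z^{l(i,k)},w_i^{l(i,k)})$ and $(z^k,w_i^k)$ when estimating $\phi_k$ in terms of $\psi_k$. The possibly-vanishing stepsize is precisely why I lower-bound $\psi_{ik}$ for $i\in\Iforw$ by $\Delta\|G_iz^{l(i,k)}-x_i^k\|^2$ directly from the acceptance test, rather than via the product form involving $\hat\rho_i^{l(i,k)}$: the constant $\Delta>0$ in the linesearch test is fixed, so no lower bound on the stepsize is needed here. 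One should double-check that the acceptance test is genuinely of the form~\eqref{eqLineScond} with the same $\Delta$ used throughout and that $\Delta>0$ (as fixed in the algorithm's parameters), and that the ``quick stop'' branch on line~\ref{lineQuickStop}, where $x_i^k=G_iz^{l(i,k)}$, trivially gives $G_iz^{l(i,k)}-x_i^k=0$ and so is consistent with the conclusion.
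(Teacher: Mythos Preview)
Your plan is correct and follows essentially the same route as the paper's proof: bound $\|\nabla\varphi_k\|_\gamma$ via boundedness of $\{x_i^k\},\{y_i^k\}$, deduce $\limsup_k\phi_k\le 0$ from Lemma~\ref{propFejer}, pass to $\limsup_k\psi_k\le 0$ via the $\phi_k-\psi_k\to 0$ identity from~\cite{johnstone2018projective}, and then lower-bound each $\psi_{ik}$ by a positive multiple of $\|G_iz^{l(i,k)}-x_i^k\|^2$ using the backtracking acceptance test for $i\in\Iforw$ and the error criterion~\eqref{err1} for $i\in\Iback$. One minor correction: for $i\in\Iback$ the bound is $\psi_{ik}\ge (1-\sigma)(\rho_i^{l(i,k)})^{-1}\|G_iz^{l(i,k)}-x_i^k\|^2\ge (1-\sigma)\overline{\rho}^{-1}\|G_iz^{l(i,k)}-x_i^k\|^2$, not $(1-\sigma)\|\cdot\|^2$; and your displayed ``$\max\{\phi_k,0\}\,\|\nabla\varphi_k\|_\gamma\to 0$'' should be the ratio $\max\{\phi_k,0\}/\|\nabla\varphi_k\|_\gamma\to 0$, as your subsequent line makes clear.
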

\begin{proof}



Using
\eqref{defGrad}
\begin{eqnarray}
\|\nabla \varphi_k\|_\gamma^2
=
\gamma^{-1}\left\|\textstyle{\sum_{i=1}^{n-1} G_i^* y^k_i}+y_{n}^k\right\|^2
+
\textstyle{\sum_{i=1}^{n-1}
\|x^k_i - G_i x_{n}^k\|^2}
\label{eqGrad}.
\end{eqnarray}
By assumption, $\{x_i^k\}$ and $\{y_i^k\}$ are bounded sequences, 
therefore $\{\|\nabla \varphi_k\|_\gamma\}$ is bounded; 
let $\xi_1 > 0$ be some bound on this sequence. 
Next, we will establish that there exists some $\xi_2>0$ such that
\begin{equation}\label{eqlem12}
\psi_k
\geq
\xi_2
\textstyle{\sum_{i=1}^n
\| G_i z^{l(i,k)}-x^k_i\|^2}
.
\end{equation}
The proof resembles that of~\cite[Lemma 12]{johnstone2018projective}: 
since the
backtracking linesearch terminates in a finite number of iterations, we must have
\begin{align}\label{eqLB1}
\langle G_i z^{l(i,k)} - x_i^k,y_i^k - w_i^{l(i,k)}\rangle 
\geq 
\Delta
\|G_i z^{l(i,k)} - x_i^k\|^2
\end{align}
for every $k\in\rN$ and $i\in\Iforw$. Terms in $\Iback$ are treated as before
in \cite[Lemma 12]{johnstone2018projective}: 
specifically, for all $i\in\Iback$,
\begin{align}
	\psi_{ik}
&=
\left\langle G_i z^{l(i,k)}-x^k_i,y_i^k-w^{l(i,k)}_i\right\rangle
\nonumber\\
&\overset{\text{(a)}}{=}
\left\langle G_i z^{l(i,k)}-x^k_i,
\big(\rho_i^{l(i,k)}\big)^{-1}\left(G_i z^{l(i,k)}-x^k_i+e_i^{s(i,k)}\right)
\right\rangle
\nonumber\\
&=
\big(\rho^{l(i,k)}_i\big)^{-1}\|G_i z^{l(i,k)}-x^k_i\|^2 
+\big(\rho^{l(i,k)}_i\big)^{-1}
\left\langle G_i z^{l(i,k)}-x^k_i,e_i^{s(i,k)}\right\rangle
\nonumber\\
&\overset{\text{(b)}}{\geq}
(1-\sigma)\big(\rho^{l(i,k)}_i\big)^{-1}\|G_i z^{l(i,k)}-x^k_i\|^2
\label{eqForLineS}.
\end{align}
In the above derivation, (a) follows by substitution of~\eqref{eqBack} and
(b) is justified by using~\eqref{err1}
in Assumption~\ref{assErr}.
Combining \eqref{eqLB1} and \eqref{eqForLineS} yields
\begin{align}
\psi_k\geq 
(1-\sigma)\overline{\rho}^{-1}
\sum_{i\in\Iback}
\|G_i z^{l(i,k)}-x^k_i\|^2
+
\Delta\sum_{i\in\Iforw}\|G_i z^{l(i,k)}-x^k_i\|^2,
\end{align}
which yields \eqref{eqlem12} with $\xi_2 = \min\{(1-\sigma)\overline{\rho}^{-1},\Delta\}>0$. 



We now proceed as in
as in part 1 of the proof of
\cite[Theorem~1]{johnstone2018projective}:
first,
Lemma \ref{propFejer}(\ref{eqUpdateRewrite}) states that 
the updates on lines \ref{eqAlgproj1}--\ref{eqAlgproj2}
can be written as
\[
p^{k+1} 
= p^k - \frac{\beta_k\max\{\phi_k,0\}}
{\|\nabla\varphi_k\|_\gamma^2}\nabla\varphi_k. 
\]
Lemma~\ref{propFejer}(\ref{eqDif20}) guarantees that $p^k - p^{k+1} \to 0$, so it follows that
\[
0 = \lim_{k\to\infty}\|p^{k+1}-p^k\|_\gamma 
= \lim_{k\to\infty}\frac{\beta_k\max\{\phi_k,0\}}{\|\nabla\varphi_k\|_\gamma}
\geq\frac{\underline{\beta}\limsup_{k\to\infty}\max\{\phi_k,0\}}{\sqrt{\xi_1}}.
\]
Therefore, $\limsup_{k\to\infty} \phi_k\leq 0$. Since \cite[Lemma
10]{johnstone2018projective} states that  $\phi_k -
\psi_k\to 0$, it follows that $\limsup_{k\to\infty}\psi_k\leq 0$. With (a)
following from \eqref{eqlem12}, we next obtain
\begin{align*}
0
\geq \limsup_{k\to\infty} \psi_k
&\overset{\text{(a)}}{\geq}
\xi_2\limsup_k \textstyle{\sum_{i=1}^{n}\|G_i z^{l(i,k)} - x^k_i\|^2}
\\
&\geq 
\xi_2\liminf_k \textstyle{\sum_{i=1}^{n}\|G_i z^{l(i,k)} - x^k_i\|^2}
\geq 0.
\end{align*} 
Therefore,
$
G_i z^{l(i,k)}-x_i^k\to 0
$
for $i=1,\ldots,n$. 
\myqed
\end{proof}

\begin{lemma}\label{lemBound2}
	Suppose assumptions \ref{assMono}--\ref{assStep} hold and that Algorithm
	\ref{AlgfullyAsync} produces an infinite sequence of iterates with
	$\{x_i^k\}$ and $\{y_i^k\}$ being bounded. Then, for all $i\in\Iback$, one
	has  $y_i^k - w_i^{l(i,k)}\to 0$.
\end{lemma}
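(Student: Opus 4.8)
The plan is to reuse the nonnegative decomposition $\psi_k = \sum_{i=1}^{n}\psi_{ik}$ together with the fact, already extracted in the proof of Lemma~\ref{lemBound}, that $\psi_k$ vanishes, and then isolate the $i$-th term for $i\in\Iback$ using the proximal update identity~\eqref{eqBack} and the error bound~\eqref{err2}. The key point is that for backward operators the subproblem yields an exact algebraic identity, so no continuity or boundedness argument is needed beyond what is already in hand.

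First I would observe that each $\psi_{ik}$ is nonnegative: for $i\in\Iforw$ this is~\eqref{eqLB1} (with $\Delta>0$), and for $i\in\Iback$ this is~\eqref{eqForLineS} (since $0\le\sigma<1$ and $\rho_i^{l(i,k)}>0$). The proof of Lemma~\ref{lemBound} shows $\limsup_{k\to\infty}\psi_k\le 0$; combined with $\psi_k=\sum_i\psi_{ik}\ge 0$ this forces $\psi_k\to 0$, and hence $\psi_{ik}\to 0$ for every $i=1,\ldots,n$ individually. Next, fix $i\in\Iback$ and $k\ge M$. Rearranging the first identity in~\eqref{eqBack} gives
\[
\rho_i^{l(i,k)}\big(y_i^k - w_i^{l(i,k)}\big) = \big(G_i z^{l(i,k)} - x_i^k\big) + e_i^{s(i,k)}.
\]
Taking the inner product of both sides with $y_i^k - w_i^{l(i,k)}$ and recalling the definition~\eqref{defPsi} of $\psi_{ik}$ yields
\[
\rho_i^{l(i,k)}\big\|y_i^k - w_i^{l(i,k)}\big\|^2 = \psi_{ik} + \big\langle e_i^{s(i,k)},\, y_i^k - w_i^{l(i,k)}\big\rangle .
\]
Applying~\eqref{err2} to the last term and then Assumption~\ref{assStep} gives
\[
(1-\sigma)\,\underline{\rho}\,\big\|y_i^k - w_i^{l(i,k)}\big\|^2 \;\le\; (1-\sigma)\,\rho_i^{l(i,k)}\big\|y_i^k - w_i^{l(i,k)}\big\|^2 \;\le\; \psi_{ik}.
\]
Since $(1-\sigma)\underline{\rho}>0$ and $\psi_{ik}\to 0$, this proves $y_i^k - w_i^{l(i,k)}\to 0$ for all $i\in\Iback$.

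I do not anticipate a genuine obstacle here: every ingredient is already available, and in contrast to the $\Iforw$ case the proximal subproblem supplies an exact identity rather than requiring a continuity-plus-boundedness estimate. The only care needed is to invoke the error conditions~\eqref{err1}--\eqref{err2} with the correctly delayed indices $l(i,k)$ and $s(i,k)$, exactly as was done in Lemma~\ref{lemBound}.
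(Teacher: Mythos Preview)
Your argument is correct, and it is genuinely simpler than the paper's. The paper does not isolate the individual terms $\psi_{ik}$; instead it rewrites the forward contributions using~\eqref{eqXForw} to obtain the ``crux'' inequality
\[
\psi_k + \sum_{i\in\Iforw}\langle x^k_{i}-G_{i}z^{l(i,k)},\,T_i x^k_{i}-T_i G_{i}z^{l(i,k)}\rangle
\;\ge\;
(1-\sigma)\underline{\rho}\sum_{i\in\Iback}\|y_i^k - w_i^{l(i,k)}\|^2,
\]
and then has to argue, via continuity of $T_i$ and the extreme value theorem, that the added forward correction terms vanish (since $x_i^k - G_i z^{l(i,k)}\to 0$ and $T_i x_i^k - T_i G_i z^{l(i,k)}$ is bounded). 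Your route sidesteps this entirely: because the backtracking termination condition~\eqref{eqLB1} already gives $\psi_{ik}\ge 0$ for $i\in\Iforw$, and~\eqref{eqForLineS} gives it for $i\in\Iback$, you can pass directly from $\psi_k\to 0$ to $\psi_{ik}\to 0$ termwise, and then the proximal identity plus~\eqref{err2} finishes the job without invoking continuity or boundedness of $T_i$ at all in this lemma. The paper's version is closer to the Lipschitz-case argument of~\cite[Lemma~13]{johnstone2018projective} it is adapting, which explains the extra machinery; your version is more self-contained.
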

\begin{proof}
The argument to is siimlar to those of
\cite[Lemma~13]{johnstone2018projective} and \cite[Theorem~1
(part~2)]{johnstone2018projective}:  the crux of the proof is 
to establish for all $k\geq M$ that
\begin{align}
\psi_k 
+ \sum_{i\in\Iforw}\langle x^k_{i}-G_{i}z^{l(i,k)},T_i x^k_{i}-T_i G_{i}z^{l(i,k)}\rangle
\geq
(1-\sigma)\underline{\rho}
\sum_{i\in\Iback} \|y_i^k - w_i^{l(i,k)}\|^2.
  \label{eqPhiLB2}
\end{align}
Since $T_i$ is continuous and defined everywhere, $x_i^k$ is bounded by
assumption, and $z^{l(i,k)}$ is bounded by Lemma \ref{propFejer}, the extreme
value theorem implies that $T_i x^k_{i}-T_i G_{i}z^{l(i,k)}$ is bounded.
Furthermore from Lemma~\ref{lemBound},
$
\lim\sup_{k\to\infty}\psi_k\leq 0,
$
and $x_i^k - G_iz^{l(i,k)}\to 0$.
Therefore the desired result follows from~\eqref{eqPhiLB2}.

It remains to prove \eqref{eqPhiLB2}. 
For all $k\geq  M$, we have 
\allowdisplaybreaks
\begin{align}
\nonumber
\psi_k &= \sum_{i=1}^n\langle G_i z^{l(i,k)}-x^k_i,y^k_i-w^{l(i,k)}_i\rangle \\
&\overset{\text{(a)}}{=}
\sum_{i\in\Iback}
\langle \rho^{l(i,k)}_i(y^k_i - w^{l(i,k)}_i) - e_i^{s(i,k)},y^k_i - w^{l(i,k)}_i\rangle
\nonumber\\&\qquad
+
\sum_{i\in\Iforw}\langle G_{i} z^{l(i,k)}-x^k_{i},T_i G_{i} z^{l(i,k)}-w_{i}^{l(i,k)}\rangle
\nonumber\\&\qquad
+
\sum_{i\in\Iforw}
\langle G_{i}z^{l(i,k)}-x^k_{i},y^k_{i}-T_i G_{i} z^{l(i,k)}\rangle
\nonumber\\\nonumber&\overset{\text{(b)}}{=}
\sum_{i\in\Iback}
\left(
\rho^{l(i,k)}_i\|y^k_i - w^{l(i,k)}_i\|^2 - \langle e_i^{s(i,k)},y^k_i - w^{l(i,k)}_i\rangle
\right)
\\\label{dropForward}
&\qquad
+
\sum_{i\in\Iforw}\langle \rho^{l(i,k)}_{i}(T_iG_{i} z^{l(i,k)}-w_{i}^{l(i,k)}),T_i G_{i} z^{l(i,k)}-w_{i}^{l(i,k)}\rangle
\\\nonumber
&\qquad
-
\sum_{i\in\Iforw}\langle x^k_{i}-G_{i}z^{l(i,k)},T_i x^k_{i}-T_i G_{i}z^{l(i,k)}\rangle
\\
&\overset{\text{(c)}}{\geq}
(1-\sigma)\underline{\rho}\sum_{i\in\Iback}\|y^k_i - w^{l(i,k)}_i\|^2
\nonumber
-
\sum_{i\in\Iforw}\langle x^k_{i}-G_{i}z^{l(i,k)},T_i x^k_{i}-T_i G_{i}z^{l(i,k)}\rangle.
\end{align}
In the above derivation, (a) follows by substition of~\eqref{eqBack} into the
$\Iback$ terms and algebraic manipulation of the $\Iforw$ terms.  Next, (b) is
obtained by algebraic simplification of the $\Iback$ terms and substitution
of~\eqref{eqXForw} into the two groups of $\Iforw$ terms. Finally, (c) follows
by substituting the error criterion~\eqref{err2} from Assumption~\ref{assErr}
into the $\Iback$ terms and dropping the terms
from~\eqref{dropForward}, which must be nonnegative.  \myqed
\end{proof}

\begin{theorem}\label{thmjustcont}
Suppose assumptions \ref{assMono}--\ref{assStep} hold. If
    Algorithm~\ref{AlgfullyAsync} terminates at line \ref{lineReturn}, then
    its final iterate $(z^{k+1},\bw^{k+1})$ is a member of the extended
    solution set $\calS$ defined in~\eqref{defCompExtSol}. Otherwise, the
    sequence $\{(z^k,\bw^k)\}$ generated by Algorithm~\ref{AlgfullyAsync}
    converges weakly to some point $(\bar z,\overline{\bw})$ in $\calS$ and
    furthermore $x_i^k\rightharpoonup G_i \bar z$ and $y_i^k\rightharpoonup
    \overline{w}_i$ for all $i=1,\ldots,n-1$, $x_{n}^k\rightharpoonup \bar z$,
    and $y_n^k\rightharpoonup -\sum_{i=1}^{n-1}G_i^* \overline{w}_i$.
\end{theorem}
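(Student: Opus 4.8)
The plan is to dispose of the terminating case directly and, for the non-terminating case, to reduce everything to Lemma~\ref{lemSumOld}. If Algorithm~\ref{AlgfullyAsync} returns at line~\ref{lineReturn}, then $\pi_k=0$, so $u^k=0$ and $v^k=0$; the former gives $x_i^k=G_i x_n^k$ for $i=1,\dots,n-1$ and the latter gives $y_n^k=-\sum_{i=1}^{n-1}G_i^\ast y_i^k$. Since the return additionally requires $\bigcup_{j=1}^{k}I_j=\{1,\dots,n\}$, every index has been processed at least once, and as $(x_i^k,y_i^k)$ is frozen on iterations with $i\notin I_k$ we have $y_i^k\in T_i x_i^k$ for all $i$ (the $\prox$ inclusion for $i\in\Iback$, the identity $y_i^k=T_i x_i^k$ for $i\in\Iforw$). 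Taking $\bar z=x_n^k$, $\overline{w}_i=y_i^k$ and combining these facts shows $(\bar z,\overline{\bw})\in\calS$ by~\eqref{defCompExtSol}, and this is precisely the returned iterate.

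For the infinite case, by Lemmas~\ref{lemBound}, \ref{lemBound2} and~\ref{lemSumOld} it suffices to prove (i) $\{x_i^k\}$ and $\{y_i^k\}$ are bounded for every $i$, and (ii) $y_i^k-w_i^{l(i,k)}\to 0$ for $i\in\Iforw$. For (i): $\{z^k\},\{w_i^k\}$ are bounded by Lemma~\ref{propFejer}(1), hence so are $\{z^{l(i,k)}\},\{w_i^{l(i,k)}\}$ (since $l(i,k)\le k$), and $\{e_i^k\}$ is bounded by Assumption~\ref{assErr}. For $i\in\Iback$, pick $(z^\ast,\bw^\ast)\in\calS$; then $\prox_{\rho T_i}(G_i z^\ast+\rho w_i^\ast)=G_i z^\ast$ for all $\rho>0$, so nonexpansiveness of the proximal map applied to~\eqref{eqBack} bounds $\|x_i^k-G_i z^\ast\|$ by $\|G_i(z^{l(i,k)}-z^\ast)\|+\overline{\rho}\,\|w_i^{l(i,k)}-w_i^\ast\|+\|e_i^{s(i,k)}\|$, and then $y_i^k$ is bounded because $\rho_i^{l(i,k)}\ge\underline{\rho}$. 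For $i\in\Iforw$, the accepted stepsize obeys $\hat\rho_i^{l(i,k)}\le\rho_i^{l(i,k)}\le\overline{\rho}$ (the backtracking loop only shrinks the trial value below its initial value $\rho_i^{d(i,k)}$), so~\eqref{eqXForw} bounds $x_i^k$ once $\{T_i G_i z^{l(i,k)}\}$ is bounded, which holds because $\{G_i z^{l(i,k)}\}$ lies in a compact subset of the finite-dimensional $\calH_i$ and $T_i$ is continuous; the same argument applied to $y_i^k=T_i x_i^k$ bounds $\{y_i^k\}$. With (i) in hand, Lemma~\ref{lemBound} gives $G_i z^{l(i,k)}-x_i^k\to 0$ for all $i$ (hypothesis~\ref{lemEas} of Lemma~\ref{lemSumOld}) and Lemma~\ref{lemBound2} gives hypothesis~\ref{lempointhard} for $i\in\Iback$.

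The crux is (ii), and this is where the non-Lipschitz setting bites, since $\hat\rho_i^{l(i,k)}$ need not be bounded away from $0$. Set $r_i^k=T_i G_i z^{l(i,k)}-w_i^{l(i,k)}$, so~\eqref{eqXForw} reads $x_i^k=G_i z^{l(i,k)}-\hat\rho_i^{l(i,k)} r_i^k$; thus $\hat\rho_i^{l(i,k)} r_i^k=G_i z^{l(i,k)}-x_i^k\to 0$, and since $\{x_i^k\}$ and $\{G_i z^{l(i,k)}\}$ sit in a common relatively compact set on which $T_i$ is uniformly continuous, $y_i^k-T_i G_i z^{l(i,k)}=T_i x_i^k-T_i G_i z^{l(i,k)}\to 0$; hence it suffices to show $r_i^k\to 0$. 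Suppose not: then $\|r_i^k\|\ge\epsilon$ along a subsequence, which forces $\hat\rho_i^{l(i,k)}\to 0$ along it, so for large $k$ in that subsequence the backtracking cannot have accepted its first trial (as $\hat\rho_i^{l(i,k)}\to 0$ while the first trial equals $\rho_i^{d(i,k)}\ge\underline{\rho}$) and therefore rejected the previous trial value $\rho'=\hat\rho_i^{l(i,k)}/\nu$ at the point $\tilde x'=G_i z^{l(i,k)}-\rho' r_i^k$. Writing out the violated termination inequality from line~\ref{lineIf} exactly as in the proofs of Lemmas~\ref{lemFinite} and~\ref{lemBound}, using $\langle r_i^k, T_i G_i z^{l(i,k)}-w_i^{l(i,k)}\rangle=\|r_i^k\|^2$ and Cauchy--Schwarz, and dividing by $\rho'\|r_i^k\|>0$, one obtains
\begin{equation*}
\big(1-\Delta\rho'\big)\,\|r_i^k\| \;<\; \big\|T_i\tilde x' - T_i G_i z^{l(i,k)}\big\|.
\end{equation*}
But $\rho' r_i^k=(\hat\rho_i^{l(i,k)}/\nu)\, r_i^k\to 0$ and $T_i$ is uniformly continuous on a bounded set, so the right-hand side tends to $0$, while $\Delta\rho'\to 0$; hence $\|r_i^k\|\to 0$ along the subsequence, contradicting $\|r_i^k\|\ge\epsilon$. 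Therefore $r_i^k\to 0$, so $y_i^k-w_i^{l(i,k)}\to 0$ for $i\in\Iforw$ as well, and Lemma~\ref{lemSumOld} applies and yields weak convergence to $\calS$ together with all the asserted convergences of $x_i^k$ and $y_i^k$. The one genuinely new step is this last contradiction argument, which replaces the Lipschitz-based lower bound on the stepsizes used in~\cite{johnstone2018projective}; everything else is assembled from the quoted lemmas.
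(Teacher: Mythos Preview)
Your proof is correct and follows essentially the same route as the paper: reduce to Lemma~\ref{lemSumOld}, establish boundedness of $\{x_i^k\},\{y_i^k\}$, invoke Lemmas~\ref{lemBound} and~\ref{lemBound2}, and then handle $i\in\Iforw$ by exploiting uniform continuity of $T_i$ on a compact set together with the backtracking mechanism. The only difference is cosmetic: for the key step $r_i^k\to 0$, the paper fixes $\epsilon$ and builds an explicit lower bound $\underline{\rho}^{bt}(\epsilon,\delta)$ on the accepted stepsize whenever $\|r_i^k\|>\epsilon/2$, then uses $\|r_i^k\|=\|G_iz^{l(i,k)}-x_i^k\|/\hat\rho_i^{l(i,k)}$; you instead argue by contradiction, pass to a subsequence with $\|r_i^k\|\ge\epsilon$, force $\hat\rho_i^{l(i,k)}\to 0$, and read off $(1-\Delta\rho')\|r_i^k\|<\|T_i\tilde x'-T_iG_iz^{l(i,k)}\|\to 0$ from the \emph{rejected} previous trial $\rho'=\hat\rho_i^{l(i,k)}/\nu$. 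Both arguments hinge on exactly the same ingredient (the failed line-search inequality one step before acceptance, combined with uniform continuity), so they are equivalent; your packaging is a bit shorter and avoids the explicit constant $\underline{\rho}^{bt}$.
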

\begin{proof}
The argument when the algorithm terminates via line \ref{lineReturn} is
identical to \cite[Theorem 1]{johnstone2018projective}.
From now on we assume the algorithm produces an infinite sequence of
iterates. The proof proceeds by showing that the two conditions of Lemma
\ref{lemSumOld} are satisfied.  To establish Lemma~\ref{lemSumOld}(\ref{lemEas}) and Lemma~\ref{lemSumOld}(2) for $i\in\Iback$,
we will show that $\{x_i^k\}$ and $\{y_i^k\}$ are bounded, and then employ
Lemmas~\ref{lemBound} and \ref{lemBound2}. This argument is only a slight variation of what was given in \cite{johnstone2018projective}.
The main departure from~\cite{johnstone2018projective} is in 
establishing Lemma~\ref{lemSumOld}(\ref{lempointhard}) for $i\in\Iforw$, which requires significant innovation.



We begin by establishing that $\{x_i^k\}$ and $\{y_i^k\}$ are bounded. 
For $i\in\Iback$ the boundedness of $\{x_i^k\}$ follows exactly the same argument as \cite[Lemma 10]{eckstein2017simplified}. For $i\in\Iforw$ write using Lemma \ref{lemUpdates}
\begin{align}
\|x_i^k\|
&\leq
\|G_i z^{l(i,k)} - \hat{\rho}_i^{l(i,k)}T_i G_i z^{l(i,k)}\|
+
\hat{\rho}_i^{l(i,k)}\|w_i^{l(i,k)}\|
\label{eqxbound1}\\\label{eqxbound2}
&\leq 
\|G_i\| \|z^{l(i,k)}\| + \overline{\rho}\|T_i G_i z^{l(i,k)}\|
+
\overline{\rho}\|w_i^{l(i,k)}\|.
\end{align}
Now $z^{l(i,k)}$ and $w_i^{l(i,k)}$ are bounded by Lemma \ref{propFejer}.
Furthermore, since $T_i$ is continuous with full domain, $G_i$ is bounded, and
$z^{l(i,k)}$ is bounded, $\{T_iG_i z^{l(i,k)}\}$ is bounded by the extreme
value theorem.  Thus $\{x_i^k\}$ is bounded for $i\in\Iforw$. 

Now we prove that $\{y_i^k\}$ is bounded. For $i\in\Iback$, Lemma
\ref{lemUpdates} implies that
\begin{align*}
y_i^k = \left(\rho_i^{l(i,k)}\right)^{-1}
\left(
G_i z^{l(i,k)} - x_i^k + \rho_i^{l(i,k)} w_i^{l(i,k)}
+
e_i^{s(i,k)}
\right).
\end{align*}
Since $\rho_i^k$ is bounded from above and below, $G_i$ is bounded, $z^{l(i,k)}$ and
$w_i^{l(i,k)}$ are bounded by Lemma \ref{propFejer}, and $e_i^{s(i,k)}$ is assumed to be
bounded, $\{y_i^k\}$ is bounded for $i\in\Iback$. For $i\in\Iforw$, since
$y_i^k = T_i x_i^k$ and $T_i$ is continuous with full domain, it follows again
from the extreme value theorem that $\{y_i^k\}$ is bounded.

Therefore we can apply Lemma~\ref{lemBound} to infer that $G_i z^{l(i,k)} -
x_i^k\to 0$ for $i=1,\ldots,n$, and Lemma~\ref{lemSumOld}(\ref{lemEas}) holds.
Furthermore we can apply Lemma~\ref{lemBound2} to infer that $y_i^k -
w_i^{l(i,k)}\to 0$ for $i\in\Iback$.

It remains to establish that $y_i^k - w_i^{l(i,k)}\to 0$ for $i\in\Iforw$. The
argument needs to be significantly expanded from that in
\cite{johnstone2018projective}, since it is not immediate that the stepsize
$\hat{\rho}_i^k$ is bounded away from $0$. 

From Lemma \ref{propFejer}, we know that
$z^{l(i,k)}$ and $w_i^{l(i,k)}$ are bounded, as is the operator $G_i$ by
assumption. Furthermore, since $T_i$ is continuous with full domain, we know
once again from the extreme value theorem that there exists $B\geq 0$ such that
\begin{align}\label{eqBoundB}
(\forall k\in\mathbb{N}) \quad
\|T_i G_i z^{l(i,k)} - w_i^{l(i,k)}\|\leq B.
\end{align} 
We have already shown that $x_i^k$ is bounded.  Using the boundedness of $z^k$
and $w_i^k$ in conjunction with Assumption~\ref{assStep} and inspecting the
steps in the backtracking search, there must exist a closed ball
$\mathcal{B}_x\subset\mathcal{H}_i$ such that $\tilde{x}_i^{(j,s(i,k))}\in\mathcal{B}_x$ for all
$k,j\in\mathbb{N}$ such that $i \in I_k$ and $j$ is encountered during the
backtracking linesearch at step $k$.
In addition, let
$\mathcal{B}_{GZ}\subset\mathcal{H}_i$ be a closed ball containing $G_iz^{l(i,k)}$
for all $k\in\mathbb{N}$. Let $\mathcal{B} =
\mathcal{B}_x\cup\mathcal{B}_{GZ}$, which is another closed ball. Since $\mathcal{H}_i$ is finite dimensional, $\mathcal{B}$ is compact.
Since $T_i$ is continous everywhere, by the Heine-Cantor theorem it is
uniformly continuous on $\mathcal{B}$ \cite[Theorem
21.4]{ross1980elementary}.


Continuing, we write
\begin{align}\label{eq2y}
y_i^k -w_i^{l(i,k)}= T_ix_i^k-w_i^{l(i,k)} 
=  T_i G_i z^{l(i,k)}-w_i^{l(i,k)} + T_i x_i^k - T_i G_i z^{l(i,k)}.
\end{align}
Since $T_i$ is uniformly continuous on $\mathcal{B}$ it must be Cauchy
continuous, meaning that $x_i^k - G_i z^{l(i,k)}\to0$ implies $T_i
x_i^k - T_i G_i z^{l(i,k)}\to 0$. Thus, to prove that $y_i^k -
w_i^{l(i,k)}\to 0$ it is sufficient to show that $T_i G_i
z^{l(i,k)}-w_i^{l(i,k)}\to 0$.

We now show that indeed $T_i G_i
z^{l(i,k)}-w_i^{l(i,k)}\to 0$. 
Fix $\epsilon>0$. Since $T_i$ is uniformly continuous on $\mathcal{B}$,
there exists $\delta>0$ such that  whenever
$x,y \in \mathcal{B}$ and $\|x-y\|\leq\delta$, then
$\|T_ix-T_iy\|\leq\epsilon/4$.
Since $G_i z^{l(i,k)} - x_i^k\to 0$, there exists $K\geq 1$ 
such that for all $k\geq K$,
\begin{align}\label{eqLimit}
\|G_i z^{l(i,k)} - x_i^k\|\leq 
\epsilon 
\min\left(
\frac{\nu\epsilon}{4B\Delta},\frac{\nu\delta}{B},\underline{\rho} 
\right)
\end{align}
with $B$ as in \eqref{eqBoundB}, $\Delta$ from the linesearch termination
criterion, and $\underline{\rho}$ from Assumption \ref{assStep}. For any
$k\geq K$ we will show that
\begin{align}
\label{eqWhatWant}
\|T_i G_i z^{l(i,k)} - w_i^{l(i,k)}\|\leq \epsilon.
\end{align}
If 
$
\|T_i G_i z^{l(i,k)} - w_i^{l(i,k)}\| \leq \epsilon/2,
$
then \eqref{eqWhatWant} clearly holds.  So from now on it is sufficient to
consider $k$ for which
$
\|T_i G_i z^{l(i,k)} - w_i^{l(i,k)}\|> \epsilon/2.
$
Again, let $\rho_i^{(j,s(i,k))} = \rho$ for brevity. Reconsidering \eqref{eq2pass}, we now
have the following lower bound for the left-hand side of \eqref{eqLineScond}:
\begin{equation*}
\frac{ 
	\|T_iG_i z^{l(i,k)} - w_i^{l(i,k)}\|
	-
	\Big\|T_i\big(G_i z^{l(i,k)} - \rho(T_iG_i z^{l(i,k)} - w_i^{l(i,k)})\big) 
	         - T_iG_i z^{l(i,k)}\Big\|
}
{\rho\|T_iG_i z^{l(i,k)}-w_i^{l(i,k)}\|}
\end{equation*}
\vspace{-1.6ex}
\begin{equation}
>
\frac{ 
	\epsilon/2
	-
	\Big\|T_i\big(G_i z^{l(i,k)} - \rho(T_iG_i z^{l(i,k)} - w_i^{l(i,k)})\big) 
	         - T_iG_i z^{l(i,k)}\Big\|
}
{\rho \|T_iG_i z^{l(i,k)}-w_i^{l(i,k)}\|}.
\label{weirdsqueeze}
\end{equation}
Now,  suppose it were true that
\begin{align}\label{eqStep}
\|G_i z^{l(i,k)}-\tilde{x}_i^{(j,s(i,k))}\| 
   = \rho\|T_i G_i z^{l(i,k)} - w_i^{l(i,k)}\|\leq \delta.
\end{align}
Then the uniform continuity of $T_i$ on $\mathcal{B}$ would imply that
\begin{multline*}
\|T_i G_i z^{l(i,k)}-T_i \tilde{x}_i^{(j,s(i,k))}\|
\shortversion{\\}
=
\Big\|T_i\big(G_i z^{l(i,k)} 
        - \rho(T_iG_i z^{l(i,k)} - w_i^{l(i,k)})\big) - T_iG_i z^{l(i,k)}\Big\|
\leq 
\frac{\epsilon}{4}.
\end{multline*}
We next observe that \eqref{eqStep} is implied by
$
\rho\leq\frac{\delta}{B},
$
in which case~\eqref{weirdsqueeze} gives the following lower bound on the left-hand side of~\eqref{eqLineScond}:
\begin{align*}
\frac{
	\langle G_i z^{l(i,k)} - \tilde{x}_i^{(j,s(i,k))},\tilde{y}_i^{(j,s(i,k))} - w_i^{l(i,k)}\rangle 
}{\|G_i z^{l(i,k)} - \tilde{x}_i^{(j,s(i,k))}\|^2}
>
\frac{\epsilon}{4\rho\|T_i G_i z^{l(i,k)} - w_i^{l(i,k)}\|}
\geq
\frac{\epsilon}{4\rho B}.
\end{align*}
Therefore if $\rho$ also satisfies
$
\rho\leq \frac{\epsilon}{4B\Delta}
$
, then 
\begin{align}
\frac{
	\langle G_i z^{l(i,k)} - \tilde{x}_i^{(j,s(i,k))},\tilde{y}_i^{(j,s(i,k))} - w_i^{l(i,k)}\rangle 
}{\|G_i z^{l(i,k)} - \tilde{x}_i^{(j,s(i,k))}\|^2}
>
\Delta. \label{eqTerm}
\end{align}
Thus, any stepsize satisfying
$\rho \leq (1/B) \min\left\{ \epsilon/4\Delta, \delta \right\}$
must cause the backtracking linesearch termination criterion at line
\ref{lineIf} to hold. Therefore, since the backtracking linesearch proceeds by
reducing the stepsize by a factor of $\nu$ at each inner iteration, it must terminate with
\begin{align}
\hat{\rho}_i^{l(i,k)}\geq 
\underline{\rho}^{bt}
\triangleq
\min\left\{
\frac{\nu\epsilon}{4 B\Delta}
,
\frac{\nu\delta}{B},
\underline{\rho}
\right\}.\label{eqstepLB}
\end{align}
Now, using Lemma \ref{lemUpdates}, we have 
\begin{align*}
x_i^{k} - G_i z^{l(i,k)} = -\hat{\rho}_i^{l(i,k)}(T_i G_i z^{l(i,k)} - w_i^{l(i,k)})
\end{align*}
and therefore
\begin{align*}
\|x_i^{k} - G_i z^{l(i,k)}\| 
   = \hat{\rho}_i^{l(i,k)}\|T_i G_i z^{l(i,k)} - w_i^{l(i,k)}\|.
\end{align*}
Thus,
\begin{align*}
\|T_i G_i z^{l(i,k)} - w_i^{l(i,k)}\|
&\leq
(\underline{\rho}^{bt})^{-1}
\|x_i^{k} - G_i z^{l(i,k)}\|
\\
&\leq 
\min\left\{
\frac{\nu\epsilon}{4 B\Delta}
,
\frac{\nu\delta}{B},
\underline{\rho}
\right\}^{-1}
\|x_i^{k} - G_i z^{l(i,k)}\| \leq \epsilon.
\end{align*}
and therefore \eqref{eqWhatWant} holds for all $k\geq K$. Since $\epsilon>0$
was chosen arbitrarily, it follows that $\|T_i G_i z^{l(i,k)} -
w_i^{l(i,k)}\|\to 0$ and thus $\|y_i^k - w_i^{l(i,k)}\|\to 0$ by \eqref{eq2y}.
The proof that Lemma~\ref{lemSumOld}(\ref{lempointhard}) holds is now complete.
The proof of the theorem now follows from Lemma~\ref{lemSumOld}. 
 \myqed
\end{proof}
If $\mathcal{H}_i$ is not finite dimensional for $i\in\Iforw$, Theorem
\ref{thmjustcont} can still be proved if the assumption on $T_i$ is
strengthened to Cauchy continuity over all bounded sequences. This is slightly
stronger than the assumption given in \cite[Equation (1.1)]{tseng2000modified}
for proving weak convergence of Tseng's forward-backward-forward method. That
assumption is Cauchy continuity but only for all \emph{weakly convergent}
sequences.

\bibliographystyle{spmpsci}
\bibliography{refs}

\end{document}